\newtheorem{theorem}{Theorem}[section]
\newtheorem{lemma}{Lemma}[section]
\newtheorem{corollary}{Corollary}[section]
\newtheorem{remark}{Remark}[section]
\newtheorem{algorithm}{Algorithm}[section]
\newtheorem{example}{Example}[section]
\begin{document}
\title{A Multigrid Method for the Ground State Solution of
Bose-Einstein Condensates\footnote{This work was supported in part by National Science Foundations of China
(NSFC 91330202, 11371026, 11001259, 11031006, 2011CB309703)
and the National Center for Mathematics and Interdisciplinary Science,
CAS and the President Foundation of AMSS-CAS.}
}
\author{
Hehu Xie\footnote{LSEC, ICMSEC,Academy of Mathematics and Systems Science, Chinese Academy of
Sciences, Beijing 100190, China (hhxie@lsec.cc.ac.cn)}\ \ \ and \ \
Manting Xie\footnote{LSEC, ICMSEC,Academy of Mathematics and Systems Science, Chinese Academy of
Sciences, Beijing 100190, China (xiemanting@lsec.cc.ac.cn)}
}
\date{}
\maketitle
\begin{abstract}
A multigrid method is proposed to compute the ground state solution of Bose-Einstein
condensations by the finite element method
 based on the multilevel correction for eigenvalue problems and the multigrid method for
 linear boundary value problems. In this scheme, obtaining the optimal approximation for the
 ground state solution of Bose-Einstein condensates includes a sequence of
 solutions of the linear boundary value problems by the multigrid method on the
 multilevel meshes and a series of solutions of nonlinear eigenvalue problems
  on the coarsest finite element space.
The total computational work of this scheme can reach almost the optimal
order as same as solving the corresponding linear boundary value problem. Therefore,
this type of multigrid scheme can improve the overall efficiency for the simulation of
Bose-Einstein condensations. Some numerical experiments are provided to validate the
efficiency of the proposed method.
\vskip0.3cm {\bf Keywords.} BEC, GPE, eigenvalue problem, multigrid, multilevel correction,
finite element method.
\vskip0.2cm {\bf AMS subject classifications.} 65N30, 65N25, 65L15, 65B99.
\end{abstract}

\section{Introduction}
Recently, Bose-Einstein condensation (BEC), which is a gas of bosons that are in the same quantum state,
is an active field \cite{BaoCai, GrifSnokeStrin,Ketterle}. In 2001, the Nobel Prize in Physics was
awarded Eric A. Cornell, Wolfgang Ketterle and Carl E. Wieman
\cite{AnderEnsherMattewWieman,CornellWieman,Ketterle} for their research in BEC.
The properties of the condensate at zero or very low temperature \cite{DalGioPitaString,LiebSeiYang}
can be described by the well-known Gross-Pitaevskii equation (GPE) \cite{Gross,JinLevermoreMcLaughlin}
which is a time-independent nonlinear Schr\"odinger equation \cite{LaudauLifschitz}.
So far, it is found that the GPE fits well for the most of experiments
\cite{AnglinKetterle,Cornell,DalGioPitaString,HauBuschLiuDutton}.

As we know that the wave function $\psi$ of a sufficiently dilute condensates satisfies
the following GPE
\begin{eqnarray}\label{GPE}
\left(-\frac{\hbar^2}{2m}\Delta + \widetilde{W} + \frac{4\pi\hbar^2aN}{m}|\psi|^2\right)\psi &=&\mu\psi,
\end{eqnarray}
where $\widetilde{W}$ is the external potential, $\mu$ is the chemical potential and $N$ is the number
of atoms in the condensate. The effective two-body interaction is $4\pi\hbar^2a/m$, where $\hbar$ is the
Plank constant, $a$ is the scattering length (positive for repulsive interaction and negative for
attractive interaction) and $m$ is the particle mass. In this paper, we assume the external potential
 $\widetilde{W}(x)$ is measurable and locally bounded and tends to infinity as $|x|\rightarrow\infty$
in the sense that
\begin{eqnarray*}
\inf_{|x|\geq r}\widetilde{W}(x)\rightarrow \infty\ \ \ \ {\rm for}\ r\rightarrow \infty.
\end{eqnarray*}
Then the wave function $\psi$ must vanish exponentially fast as $|x|\rightarrow \infty$.
Furthermore (\ref{GPE}) can be written as
\begin{eqnarray}\label{GPE_Simple}
\left(-\Delta +\frac{2m}{\hbar^2}\widetilde{W}+8\pi aN|\psi|^2\right)\psi&=&
\frac{2m\mu}{\hbar^2}\psi.
\end{eqnarray}
Hence in this paper, we are concerned with the following non-dimensionalized GPE problem:\\
Find $(\lambda,u)\in \mathbb{R}\times H_0^1(\Omega)$ such that
\begin{equation}\label{GPEsymply2}
\left\{
\begin{array}{rcl}
-\Delta u + Wu + \zeta |u|^2u &=& \lambda u,\ \ \  {\rm in}\  \Omega,\\
u &=& 0,\ \ \ \ \  {\rm on}\ \partial \Omega,\\
\int_\Omega|u|^2 &=& 1,
\end{array}
\right.
\end{equation}\\where $\Omega \subset \mathbb{R}^d$ $(d = 1,2,3)$ denotes the computing
domain which have the cone property \cite{Adams}, $\zeta$ is some positive constant and
$W(x) = \gamma_1 x^2_1 +\cdots + \gamma_d x^2_d \geq 0$
with  $\gamma_1, \cdots, \gamma_d > 0$ \cite{BaoTang,ZhouBEC}.

So far, there have existed many papers discussing the numerical methods for the
time-dependent GPEs and time-independent GPEs. Please refer to the papers \cite{Adhikari,AdhikariMuruganandam,AnglinKetterle,BaoCai,BaoDu,BaoTang,Cerimele.et.al.,Chiofalo,Cornell,
CornellWieman,DalGioPitaString,Dodd,EdwardsBurnett,Ketterle,SchneiderFeder}
and the papers cited therein. Especially, in \cite{ZhouBEC}, the convergence
of the finite element method for GPEs has been proved and the priori error estimates
of the finite element method for GPEs has been presented in \cite{CancesChakirMaday}
which will be used in this paper.


Recently, a type of multigrid method for eigenvalue problems has been proposed in
 \cite{LinXie,Xie_Steklov,Xie_Nonconforming,Xie_JCP}.
The aim of this paper is to present a multigrid scheme for GPE (\ref{GPEsymply2})
based on the multilevel correction method in \cite{LinXie}.
With this method, solving GPE will not be more difficult than solving
the corresponding linear boundary value problem. The multigrid method for GPE
is based on a series of nested finite element spaces with different level
of accuracy which can be built in the same way as the multilevel method for boundary
value problems \cite{Xu}.
The corresponding error and computational work estimates
of the proposed multigrid scheme for the GPE will also be analyzed.
Based on the analysis, the proposed method can obtain optimal errors with an almost optimal
computational work. The eigenvalue multigrid procedure can be described as follows:
(1) solve the GPE in the initial finite element space;
(2) use the multigrid method to solve an additional linear boundary value problem
which is constructed by using the previous obtained eigenpair approximation;
(3) solve a GPE again on the finite element space which is constructed by combining
the coarsest finite element space with the obtained eigenfunction approximation in
step (2). Then go to step (2) for the next loop until stop. In this method, we replace
solving semi-linear eigenvalue problem GPE on the finest finite element space by
solving a series of linear boundary value problems with multigrid scheme in the
corresponding series of finite element spaces and a series of GPEs in the coarsest
finite element space. So this multigrid method can improve the overall efficiency
of solving GPEs as it does for linear boundary value problems.

An outline of the paper goes as follows. In Section 2, we introduce finite element
method for the ground state solution of BEC, i.e. non-dimensionalized GPE (\ref{GPEsymply2}).
A type of one corrections step is given in Sections 3 based on the fixed-point iteration.
In Section 4, we propose a type of multigrid algorithm for solving the
non-dimensionalized GPE by the finite element method. Section 5 is devoted to
estimating the computational work for the multigrid method defined in Section 4.
Two numerical examples are provided in Section 6 to validate our theoretical analysis.
Some concluding remarks are given in the last section.

\section{Finite element method for GPE problem}

In this section, we introduce some notation and the finite element method
for GPE (\ref{GPEsymply2}). The letter $C$ (with or without subscripts) denotes
a generic positive constant which may be different at its different occurrences.
For convenience, the symbols $\lesssim$, $\gtrsim$ and $\approx$
will be used in this paper. That $x_1\lesssim y_1, x_2\gtrsim y_2$
and $x_3\approx y_3$, mean that $x_1\leq C_1y_1$, $x_2 \geq c_2y_2$
and $c_3x_3\leq y_3\leq C_3x_3$ for some constants $C_1, c_2, c_3$
and $C_3$ that are independent of mesh sizes (see, e.g., \cite{Xu}).
We shall use the standard notation for Sobolev spaces $W^{s,p}(\Omega)$ and their
associated norms $\|\cdot\|_{s,p,\Omega}$ and seminorms $|\cdot|_{s,p,\Omega}$
(see, e.g., \cite{Adams}). For $p=2$, we denote
$H^s(\Omega)=W^{s,2}(\Omega)$ and $H_0^1(\Omega)=\{v\in H^1(\Omega):\ v|_{\partial\Omega}=0\}$,
where $v|_{\partial\Omega}=0$ is in the sense of trace,
$\|\cdot\|_{s,\Omega}=\|\cdot\|_{s,2,\Omega}$. In this paper, we set $V=H_0^1(\Omega)$.
and use $\|\cdot\|_s$ to denote $\|\cdot\|_{s,\Omega}$ for simplicity.

For the aim of finite element
discretization, we define the corresponding weak form for (\ref{GPEsymply2}) as follows:\\
Find $(\lambda,u)\in \mathbb{R}\times V$ such that $b(u,u) = 1$ and
\begin{equation}\label{GPEweakform}
a(u,v) = \lambda b(u,v),\ \ \  \forall v \in V,
\end{equation}
where
\[
a(u,v) := \int_\Omega\big(\nabla u\nabla v + Wuv + \zeta|u|^2 uv\big)d\Omega,
\ \ \ \ b(u,v) := \int_\Omega uvd\Omega.
\]

Now, let us define the finite element method \cite{BrennerScott,Ciarlet}
for the problem (\ref{GPEweakform}). First we
generate a shape-regular decomposition of the computing domain
$\Omega \subset \mathbb{R}^d$ $(d = 2,3)$ into triangles or rectangles for $d = 2$
(tetrahedrons or hexahedrons for $d = 3$) and the diameter of a cell $K \in \mathcal{T}_h$ is
denoted by $h_K$. The mesh diameter $h$ describes the maximum diameter of all cells
$K \in \mathcal{T}_h$. Based on the mesh $\mathcal{T}_h$, we construct the
linear finite element space denoted by $V_h\subset V$. We assume that $V_h\subset V$
is a family of finite-dimensional spaces that satisfy the following assumption:
\begin{equation}
\lim_{h\rightarrow 0}\inf_{v \in V_h} \|w - v\|_1 = 0,\ \ \ \forall w\in V.
\end{equation}

The standard finite element method for (\ref{GPEweakform}) is to solve the following
 eigenvalue problem: \\
Find $(\bar{\lambda}_h,\bar{u}_h)\in \mathbb{R}\times V_h$ such that
 $b(\bar{u}_h,\bar{u}_h) = 1$ and
\begin{equation}\label{GPEfem}
a(\bar{u}_h,v_h) = \bar{\lambda}_h b(\bar{u}_h,v_h),\ \ \   \forall v_h \in V_h.
\end{equation}
Then we define
\begin{equation}\label{delta}
\delta_h(u) := \inf_{v_h\in V_h}\|u - v_h\|_1.
\end{equation}

\begin{lemma}\label{lemma:Maday}
(\cite[Theorem 1]{CancesChakirMaday})
There exists $h_0 > 0$, such that for all $0 < h < h_0$, the smallest eigenpair approximation
 $(\bar{\lambda}_h,\bar{u}_h)$ of (\ref{GPEfem}) having the following error estimates
\begin{eqnarray}
\|u - \bar{u}_h\|_1 &\lesssim& \delta_h(u),\\
\|u - \bar{u}_h\|_0 &\lesssim& \eta_a(V_h)\|u - \bar{u}_h\|_1 \lesssim \eta_a(V_h)\delta_h(u),\\
|\lambda - \bar{\lambda}_h| &\lesssim& \|u - \bar{u}_h\|^2_1
+ \|u - \bar{u}_h\|_0\lesssim \eta_a(V_h)\delta_h(u),
\end{eqnarray}
where $\eta_a(V_h)$ is defined as follows
\begin{eqnarray}\label{eta_a_h}
\eta_a(V_h)=\|u-\bar{u}_h\|_1+ \sup_{f\in L^2(\Omega),\|f\|_0=1}\inf_{v_h\in V_h}\|Tf-v_h\|_1
\end{eqnarray}
with the operator $T$ being defined as follows:\\
 Find $Tf\in u^{\perp}$ such that
\vskip-0.7cm
\begin{eqnarray*}
a(Tf,v)+2(\zeta |u|^2(Tf),v)- (\lambda(Tf),v)=(f,v),\ \ \ \ \forall v\in u^{\perp},
\end{eqnarray*}
and $u^{\perp}=\big\{v\in H_0^1(\Omega): | \int_{\Omega}uvd\Omega=0\big\}$.
\end{lemma}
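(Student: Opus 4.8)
The plan is to treat this as a standard a priori finite element error analysis for a semilinear eigenvalue problem, since the ground state $u$ is characterized as the constrained minimizer of the energy $\frac12\int_\Omega(|\nabla v|^2+Wv^2)+\frac{\zeta}{4}\int_\Omega v^4$ over $\{v\in V:\ b(v,v)=1\}$, with $\lambda$ the associated Lagrange multiplier and $\bar u_h$ the corresponding discrete minimizer over $V_h$. First I would establish \emph{basic convergence}: using the coercivity induced by the confining potential $W$, the weak lower semicontinuity of the energy, and the compactness of the relevant embedding, the discrete minimizer $\bar u_h$ exists and $\|u-\bar u_h\|_1\to 0$ as $h\to 0$; this is exactly what the hypothesis $\lim_{h\to0}\inf_{v\in V_h}\|w-v\|_1=0$ guarantees. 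This qualitative step is what produces the threshold $h_0$ and lets all subsequent nonlinear remainders be absorbed for $h<h_0$.

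The heart of the argument is \emph{linearization} around $u$. The operator $T$ in the statement is precisely the inverse of the second variation of the Lagrangian restricted to the tangent space $u^\perp$: writing out $a(Tf,v)+2(\zeta|u|^2Tf,v)-(\lambda Tf,v)$ with the coefficient $|u|^2$ frozen at the exact solution shows the associated bilinear form is $\int_\Omega(\nabla w\nabla v+Wwv+3\zeta u^2wv-\lambda wv)$. The key structural fact I would invoke is that $u$, being the \emph{ground state}, is a nondegenerate minimizer, so this linearized form is coercive on $u^\perp$ and $T\colon L^2(\Omega)\to u^\perp\cap V$ is bounded. With coercivity in hand, the $H^1$ estimate follows from a C\'ea-type quasi-optimality: writing $e_h=u-\bar u_h$, the error solves the linearized equation up to a quadratic remainder in $e_h$, and since $d\le 3$ the embedding $H^1\hookrightarrow L^6$ controls the cubic nonlinearity so that the remainder is of higher order; absorbing it yields $\|u-\bar u_h\|_1\lesssim\inf_{v_h\in V_h}\|u-v_h\|_1=\delta_h(u)$.

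For the $L^2$ estimate I would run an Aubin--Nitsche duality argument built on $T$. For each $f$ with $\|f\|_0=1$, the definition of $T$ renders $(e_h,f)$ as the linearized form applied to the pair $(e_h,Tf)$; inserting Galerkin orthogonality replaces $Tf$ by $Tf-v_h$ and produces the bound $(e_h,f)\lesssim\|e_h\|_1\inf_{v_h\in V_h}\|Tf-v_h\|_1+(\text{quadratic remainder})$. Taking the supremum over $f$ and recognizing the definition \eqref{eta_a_h} of $\eta_a(V_h)$ gives $\|u-\bar u_h\|_0\lesssim\eta_a(V_h)\|u-\bar u_h\|_1$, and the second inequality then follows from the $H^1$ bound. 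The eigenvalue estimate comes from the variational identity $\lambda-\bar\lambda_h=a(u,u)-a(\bar u_h,\bar u_h)$ under the common normalization $b(u,u)=b(\bar u_h,\bar u_h)=1$; expanding and using the equations satisfied by $u$ and $\bar u_h$ shows the leading contribution is quadratic, $\|e_h\|_1^2$, while the quartic term coupled to the constraint contributes a term controlled by $\|e_h\|_0$, giving $|\lambda-\bar\lambda_h|\lesssim\|e_h\|_1^2+\|e_h\|_0\lesssim\eta_a(V_h)\delta_h(u)$.

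The main obstacle I anticipate is making the linearization rigorous: establishing coercivity of the second-variation form on $u^\perp$ — the nondegeneracy of the ground state, where the positivity and simplicity of the smallest eigenvalue are essential — and then controlling the nonlinear remainders \emph{uniformly} in $h$. The latter requires the basic $H^1$ convergence to be in place first, so that a fixed-point or bootstrap argument closes, and it demands care with the mismatch between the constraint manifold $\{b(v,v)=1\}$ and the linear space $V_h$, which does not respect the constraint exactly. Reconciling the discrete normalization with the tangent-space analysis on $u^\perp$ is the delicate point of the whole estimate.
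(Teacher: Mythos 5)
There is nothing in the paper to compare your argument against: the paper does not prove this lemma at all, but imports it verbatim, with citation, from \cite[Theorem 1]{CancesChakirMaday}. That said, your sketch is a faithful reconstruction of the proof given in that reference --- characterization of $u$ as the constrained energy minimizer, basic $H^1$ convergence of the discrete minimizers giving the threshold $h_0$, coercivity of the linearized (second-variation) form $\int_\Omega\big(\nabla w\nabla v + Wwv + 3\zeta u^2wv - \lambda wv\big)$ on $u^\perp$, a C\'ea-type argument with absorbed quadratic remainder for the $H^1$ bound, Aubin--Nitsche duality through the operator $T$ for the $L^2$ bound, and the energy expansion for the eigenvalue --- and you have correctly decoded that $a(Tf,v)+2(\zeta|u|^2Tf,v)-\lambda(Tf,v)$ is exactly this linearized form. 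Two points in your outline should be made precise to close it. First, the coercivity on $u^\perp$ is not an abstract ``nondegeneracy of the minimizer'' postulate: it is proved from the fact that $u$ is the positive, simple ground state of the \emph{linear} Schr\"odinger operator $-\Delta+W+\zeta u^2$ with eigenvalue $\lambda$ (this is the GPE itself read as a linear eigenvalue problem), so that on $u^\perp$ the form associated with $-\Delta+W+\zeta u^2-\lambda$ is bounded below by the spectral gap, and the remaining piece $2\zeta u^2$ of the second variation is nonnegative. Second, the constraint-manifold/tangent-space mismatch you flag as the delicate point is resolved by the elementary identity $b(u,u-\bar u_h)=\tfrac12\|u-\bar u_h\|_0^2$, valid when $u$ and $\bar u_h$ are both $L^2$-normalized and sign-aligned: the component of the error along $u$ is quadratically small, so the duality argument on $u^\perp$ captures the full error up to higher order. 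Note also that the term $\|u-\bar u_h\|_1$ appears inside the definition (\ref{eta_a_h}) of $\eta_a(V_h)$ precisely so that the nonlinear remainder in your duality step can be absorbed into $\eta_a(V_h)\|u-\bar u_h\|_1$ rather than needing a separate higher-order bound.
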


\section{One correction step based on fixed-point iteration}\label{sec:1correction_fix-point}
In this section, we introduce a type of one correction step based on the fixed-point iteration to
improve the accuracy of the given eigenpair approximation. This correction step contains
solving an auxiliary linear boundary value problem with multigrid method in the finer finite
element space and a GPE on the coarsest finite element space.

In order to define the one correction step, we introduce a very coarse mesh $\mathcal{T}_H$
and the low dimensional linear finite element space $V_H$ defined on the mesh $\mathcal{T}_H$.
Assume we have obtained an eigenpair approximation $(\lambda_{h_k},u_{h_k})\in
\mathbb{R}\times V_{h_k}$ and the coarse space $V_H$ is a subset of $V_{h_k}$.
Now we introduce a type of one correction step to improve the accuracy of
the given eigenpair approximation $(\lambda_{h_k},u_{h_k})$. Let $V_{h_{k+1}} \subset V$
be a finer finite element space of $V_{h_k}$ such that $V_{h_k}\subset V_{h_{k+1}}$.
Based on this finer finite element space, we define the following one correction step.
\begin{algorithm}\label{Algm:One_Step_Correction}
One Correction Step based on Fixed-point Iteration
\begin{enumerate}
\item Define the following auxiliary boundary value problem:\\
Find $\widehat{e}_{h_{k+1}} \in V_{h_{k+1}}$ such that
\begin{eqnarray}\label{Aux_Source_Problem}
(\nabla\widehat{e}_{h_{k+1}},\nabla v_{h_{k+1}}) =
\lambda_{h_k}b(u_{h_k},v_{h_{k+1}}) - a(u_{h_k},v_{h_{k+1}}),\ \ \
 \forall v_{h_{k+1}}\in V_{h_{k+1}}.
\end{eqnarray}
Solve this equation with multigrid method \cite{Bramble,BrennerScott,Hackbush,McCormick,Xu}
to obtain an approximation
$\widetilde{e}_{h_{k+1}} \in V_{h_{k+1}}$ with the error estimate
$\|\widetilde{e}_{h_{k+1}} - \widehat{e}_{h_{k+1}}\|_1 \lesssim \varsigma_{h_{k+1}}$
and set $\widetilde{u}_{h_{k+1}} = u_{h_k} + \widetilde{e}_{h_{k+1}}$. Here $\varsigma_{h_{k+1}}$
is used to denote the accuracy for the multigrid iteration.

\item Define a new finite element space
$V_{H,h_{k+1}} = V_H + {\rm span}\{\widetilde{u}_{h_{k+1}}\}$ and solve the
following eigenvalue problem:\\
Find $(\lambda_{h_{k+1}},u_{h_{k+1}}) \in \mathbb{R} \times V_{H,h_{k+1}}$
such that $b(u_{h_{k+1}},u_{h_{k+1}}) = 1$ and
\begin{eqnarray}\label{simple_Eigen_Problem}
a(u_{h_{k+1}},v_{H,h_{k+1}}) = \lambda_{h_{k+1}}b(u_{h_{k+1}},v_{H,h_{k+1}}),
\ \  \forall v_{H,h_{k+1}} \in V_{H,h_{k+1}}.
\end{eqnarray}
\end{enumerate}
Summarize above two steps into
\begin{eqnarray*}
(\lambda_{h_{k+1}},u_{h_{k+1}}) = Correction(V_H,\lambda_{h_k},u_{h_k},V_{h_{k+1}},\varsigma_{h_{k+1}}).
\end{eqnarray*}
\end{algorithm}
\begin{theorem}\label{Thm:Error_Estimate_One_Step_Correction}
Assume $h_k<h_0$ and there exists a real number $\varepsilon_{h_k}(u)$ such that
the given eigenpair approximation $(\lambda_{h_k},u_{h_k})\in\mathbb{R}\times V_{h_k}$
has the following error estimates
\begin{eqnarray}
\|\bar{u}_{h_k}-u_{h_k}\|_{0}+|\bar{\lambda}_{h_k}-\lambda_{h_k}|
= \varepsilon_{h_k}(u).\label{Estimate_lambda_lambda_h_k}
\end{eqnarray}
Then after one correction step, the resultant approximation
$(\lambda_{h_{k+1}},u_{h_{k+1}})\in\mathbb{R}\times V_{h_{k+1}}$ has the
following error estimates
\begin{eqnarray}
\|\bar{u}_{h_{k+1}}-u_{h_{k+1}}\|_{1} &\lesssim& \varepsilon_{h_{k+1}}(u),
\label{Estimate_u_u_h_{k+1}}\\
\|\bar{u}_{h_{k+1}}-u_{h_{k+1}}\|_{0} &\lesssim& \eta_{a}(V_H)\|u-u_{h_{k+1}} \|_{1},
\label{Estimate_u_u_h_{k+1}_negative}\\
|\bar{\lambda}_{h_{k+1}}-\lambda_{h_{k+1}}|&\lesssim&
\eta_{a}(V_H)\varepsilon_{h_{k+1}}(u),\label{Estimate_lambda_lambda_h_{k+1}}
\end{eqnarray}
where
$\varepsilon_{h_{k+1}}(u):= \eta_{a}(V_{h_k})\delta_{h_k}(u)
+\|\bar{u}_{h_k}-u_{h_k}\|_{0}+|\bar{\lambda}_{h_k}-\lambda_{h_k}|
+\varsigma_{h_{k+1}}$.
\end{theorem}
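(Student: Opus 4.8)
The plan is to follow the two-step structure of Algorithm \ref{Algm:One_Step_Correction}, analyzing the linear auxiliary problem (\ref{Aux_Source_Problem}) and the coarse eigenproblem (\ref{simple_Eigen_Problem}) separately, and to control the cubic nonlinearity throughout by the elementary factorization of $|a|^2a-|b|^2b$ together with the embedding $H_0^1(\Omega)\hookrightarrow L^6(\Omega)$ (valid for $d\le 3$) and a uniform $H^1$-bound on the normalized discrete states (which follows from $b(\cdot,\cdot)=1$ and an energy bound). This nonlinear control is what distinguishes the argument from the linear multilevel-correction theory of \cite{LinXie}.

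First I would treat step~1. Setting $\hat u_{h_{k+1}}:=u_{h_k}+\hat e_{h_{k+1}}$, equation (\ref{Aux_Source_Problem}) says that $\hat u_{h_{k+1}}$ is the Galerkin solution in $V_{h_{k+1}}$ of the Poisson-type problem with right-hand side equal to the weak residual of the coarse eigenpair, namely $(\nabla\hat u_{h_{k+1}},\nabla v)=\lambda_{h_k}(u_{h_k},v)-(Wu_{h_k},v)-\zeta(|u_{h_k}|^2u_{h_k},v)$. Rewriting (\ref{GPEfem}) for $\bar u_{h_{k+1}}$ in the same form and subtracting, the error $\hat u_{h_{k+1}}-\bar u_{h_{k+1}}\in V_{h_{k+1}}$ satisfies an identity whose right-hand side contains a linear piece $\lambda_{h_k}(u_{h_k},\cdot)-\bar\lambda_{h_{k+1}}(\bar u_{h_{k+1}},\cdot)$, a potential piece with $W\in L^\infty(\Omega)$, and the cubic difference $|u_{h_k}|^2u_{h_k}-|\bar u_{h_{k+1}}|^2\bar u_{h_{k+1}}$. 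Testing with $\hat u_{h_{k+1}}-\bar u_{h_{k+1}}$, using Poincar\'e so that $\|\cdot\|_1\approx|\cdot|_1$, and bounding the cubic difference by $C\|u_{h_k}-\bar u_{h_{k+1}}\|_0\,\|\hat u_{h_{k+1}}-\bar u_{h_{k+1}}\|_1$ through the factorization and H\"older (with the $L^6$-factors absorbed into the uniform bound), I expect
\[
\|\hat u_{h_{k+1}}-\bar u_{h_{k+1}}\|_1\lesssim \|u_{h_k}-\bar u_{h_{k+1}}\|_0+|\lambda_{h_k}-\bar\lambda_{h_{k+1}}|.
\]
Inserting $\bar u_{h_k}$ and $u$ and invoking Lemma \ref{lemma:Maday} for the $L^2$ and eigenvalue errors of $\bar u_{h_k},\bar u_{h_{k+1}}$ relative to $u$, together with hypothesis (\ref{Estimate_lambda_lambda_h_k}), bounds the right-hand side by $\eta_a(V_{h_k})\delta_{h_k}(u)+\|\bar u_{h_k}-u_{h_k}\|_0+|\bar\lambda_{h_k}-\lambda_{h_k}|$; adding the multigrid error $\|\tilde e_{h_{k+1}}-\hat e_{h_{k+1}}\|_1\lesssim\varsigma_{h_{k+1}}$ yields $\|\tilde u_{h_{k+1}}-\bar u_{h_{k+1}}\|_1\lesssim\varepsilon_{h_{k+1}}(u)$.

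Next I would treat step~2. Since $(\lambda_{h_{k+1}},u_{h_{k+1}})$ is \emph{exactly} the finite element approximation of (\ref{GPEweakform}) in $V_{H,h_{k+1}}$, Lemma \ref{lemma:Maday} applies verbatim with $V_h$ replaced by $V_{H,h_{k+1}}$, and because $\tilde u_{h_{k+1}}\in V_{H,h_{k+1}}$ the best approximation obeys $\inf_{v\in V_{H,h_{k+1}}}\|u-v\|_1\le\|u-\tilde u_{h_{k+1}}\|_1$. For (\ref{Estimate_u_u_h_{k+1}}) I would bound $\|u_{h_{k+1}}-\tilde u_{h_{k+1}}\|_1$ by observing that, by the reformulation above, the weak residual of $\tilde u_{h_{k+1}}$ against $V_{H,h_{k+1}}$ is controlled by $\|\hat e_{h_{k+1}}\|_0+\varsigma_{h_{k+1}}\lesssim\varepsilon_{h_{k+1}}(u)$, so the simplicity of the ground state (a spectral gap uniform over the nested spaces) gives $\|u_{h_{k+1}}-\tilde u_{h_{k+1}}\|_1\lesssim\varepsilon_{h_{k+1}}(u)$; the triangle inequality with step~1 then yields (\ref{Estimate_u_u_h_{k+1}}). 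For (\ref{Estimate_u_u_h_{k+1}_negative}) I would use the Aubin--Nitsche bound of Lemma \ref{lemma:Maday} on $V_{H,h_{k+1}}$ together with $\eta_a(V_{H,h_{k+1}})\lesssim\eta_a(V_H)$, which holds because $V_H\subset V_{H,h_{k+1}}$ only shrinks the supremum--infimum defining $\eta_a$. For (\ref{Estimate_lambda_lambda_h_{k+1}}) I would invoke the Rayleigh-quotient identity for the symmetric (nonlinear) eigenproblem,
\[
|\bar\lambda_{h_{k+1}}-\lambda_{h_{k+1}}|\lesssim \|\bar u_{h_{k+1}}-u_{h_{k+1}}\|_1^2+\|\bar u_{h_{k+1}}-u_{h_{k+1}}\|_0,
\]
estimating the two terms by (\ref{Estimate_u_u_h_{k+1}}) and (\ref{Estimate_u_u_h_{k+1}_negative}) and using $\varepsilon_{h_{k+1}}(u)\lesssim\eta_a(V_H)$ for $h_k<h_0$.

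The main obstacle will be the cubic nonlinearity. In contrast with the linear case, closing the energy estimate in step~1 and the residual--stability estimate in step~2 requires the differences $|u_{h_k}|^2u_{h_k}-|\bar u_{h_{k+1}}|^2\bar u_{h_{k+1}}$ and $|\hat u_{h_{k+1}}|^2\hat u_{h_{k+1}}-|u_{h_k}|^2u_{h_k}$ to be dominated, in the correct dual norms, by $L^2$-quantities such as $\|u_{h_k}-\bar u_{h_{k+1}}\|_0$; this forces one to establish a uniform $H^1$ (hence $L^6$) bound on all normalized discrete ground states so that the cubic $L^6$-factors are swept into the generic constants. The second delicate point is that (\ref{Estimate_u_u_h_{k+1}})--(\ref{Estimate_lambda_lambda_h_{k+1}}) compare $u_{h_{k+1}}$ directly with the fine solution $\bar u_{h_{k+1}}$ and demand the genuine improvement factor $\eta_a(V_H)$ in the $L^2$ and eigenvalue errors; extracting this gain is exactly where the nestedness $V_H\subset V_{H,h_{k+1}}$ and the simplicity of the ground state are indispensable.
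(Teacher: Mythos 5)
Your treatment of step 1 coincides with the paper's own proof: the paper likewise forms the error equation for $\bar u_{h_{k+1}}-u_{h_k}-\widehat e_{h_{k+1}}$ in the $H^1$ semi-inner product, bounds the linear, potential and cubic pieces by H\"older and the embedding $H_0^1(\Omega)\hookrightarrow L^6(\Omega)$ with the $L^6$ factors absorbed into $H^1$ bounds of the discrete states, telescopes through $\bar u_{h_k}$ (and $u$) using Lemma \ref{lemma:Maday} and hypothesis (\ref{Estimate_lambda_lambda_h_k}), and then adds the multigrid tolerance $\varsigma_{h_{k+1}}$. Where you genuinely diverge is step 2. The paper regards (\ref{simple_Eigen_Problem}) as a subspace (Galerkin) approximation of the fine-space problem (\ref{GPEfem}) and invokes the Cea-type quasi-optimality result of Canc\`es--Chakir--Maday directly,
\begin{equation*}
\|\bar u_{h_{k+1}}-u_{h_{k+1}}\|_1\ \lesssim\ \inf_{v\in V_{H,h_{k+1}}}\|\bar u_{h_{k+1}}-v\|_1\ \le\ \|\bar u_{h_{k+1}}-\widetilde u_{h_{k+1}}\|_1,
\end{equation*}
which is exactly the quantity already controlled in step 1, so no residual or spectral-gap argument is needed. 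You instead bound $\|u_{h_{k+1}}-\widetilde u_{h_{k+1}}\|_1$ through a residual-stability argument (small weak residual of $\widetilde u_{h_{k+1}}$ in $V_{H,h_{k+1}}$ plus uniform invertibility of the linearized operator) and conclude by the triangle inequality. This route can be made to work, but it requires an extra ingredient the paper never needs: an $H$-uniform discrete stability (spectral-gap) estimate for the nonlinear eigenproblem in the spaces $V_{H,h_{k+1}}$, together with care about the eigenvalue shift and the non-normalization of $\widetilde u_{h_{k+1}}$; this is of at least the same depth as the quasi-optimality statement the paper cites, so your argument is longer without being more elementary. For the $L^2$ and eigenvalue estimates both you and the paper apply Lemma \ref{lemma:Maday}-type bounds on $V_{H,h_{k+1}}$ with the monotonicity $\eta_a(V_{H,h_{k+1}})\lesssim\eta_a(V_H)$, which the paper leaves implicit and you rightly make explicit; likewise your remark that the quadratic term forces $\varepsilon_{h_{k+1}}(u)\lesssim\eta_a(V_H)$ identifies a condition the paper's terse final sentence also uses tacitly.
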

\begin{proof}
First, we define $H^1(\Omega)$ inner-product $\widehat{a}(\cdot,\cdot)$ as
\[
\widehat{a}(w,v) = \int_\Omega\nabla w\nabla v d\Omega ,\ \ \ \forall w,v \in V.
\]
From problems (\ref{GPEfem}) and (\ref{Aux_Source_Problem}), inequality
(\ref{Estimate_lambda_lambda_h_k}), Lemma \ref{lemma:Maday}, H\"{o}lder inequality and
Sobolev space embedding inequality, the following estimates hold for any
$v_{h_{k+1}} \in V_{h_{k+1}}$
\begin{eqnarray*}
& &\widehat{a}(\bar{u}_{h_{k+1}} - u_{h_k} - \widehat{e}_{h_{k+1}},v_{h_{k+1}})\\
&=& b(\bar{\lambda}_{h_{k+1}} \bar{u}_{h_{k+1}} - \lambda_{h_k}u_{h_k},v_{h_{k+1}})\nonumber\\
&& +
\big((W + \zeta\|u_{h_k}\|^2)u_{h_k} - (W + \zeta\|\bar{u}_{h_{k+1}}\|^2)
\bar{u}_{h_{k+1}},v_{h_{k+1}}\big)\\
&\lesssim& \|\bar{\lambda}_{h_{k+1}} \bar{u}_{h_{k+1}} - \lambda_{h_k}u_{h_k}\|_0\|v_{h_{k+1}}\|_1\nonumber\\
&&\ \ \ +\|\bar{u}_{h_{k+1}}-u_{h_k}\|_0(\|\bar{u}_{h_{k+1}}\|_{0,6,\Omega}^2+\|u_{h_k}\|_{0,6,\Omega}^2)
\|v_{h_{k+1}}\|_{0,6,\Omega}\nonumber\\
&\lesssim& \Big(\|\bar{\lambda}_{h_{k+1}} \bar{u}_{h_{k+1}} -\bar{\lambda}_{h_{k}} \bar{u}_{h_{k}}\|_0
+\|\bar{\lambda}_{h_{k}} \bar{u}_{h_{k}} - \lambda_{h_k}u_{h_k}\|_0\big)\|v_{h_{k+1}}\|_1\nonumber\\
&&\ \ \ \
+ \big(\|\bar{u}_{h_{k+1}}-\bar{u}_{h_{k}}\|_0+\|\bar{u}_{h_{k}}-u_{h_k}\|_0\big)
(\|\bar{u}_{h_{k+1}}\|_1^2+\|u_{h_k}\|_1^2)\|v_{h_{k+1}}\|_1\nonumber\\
&\lesssim& \big(\eta_{a}(V_{h_k})\delta_{h_k}(u)+\varepsilon_{h_k}(u)\big)\|v_{h_{k+1}}\|_1.
\end{eqnarray*}
Then we have
\begin{equation}\label{Projrcton_u_u_k+1}
\|\bar{u}_{h_{k+1}} - u_{h_k} - \widehat{e}_{h_{k+1}}\|_1\lesssim
\eta_{a}(V_{h_k})\delta_{h_k}(u)+\varepsilon_{h_k}(u).
\end{equation}
From (\ref{Projrcton_u_u_k+1}) and $\|\widetilde{e}_{h_{k+1}}
- \widehat{e}_{h_{k+1}}\|_1 \lesssim \varsigma_{h_{k+1}}$, the following estimate holds
\begin{equation}\label{u_uwanhk+1}
\|\bar{u}_{h_{k+1}} - \widetilde{u}_{h_{k+1}}\|_1=\|\bar{u}_{h_{k+1}}-u_{h_k}-\widetilde{e}_{h_{k+1}}\|_1
\lesssim \eta_{a}(V_{h_k})\delta_{h_k}(u)+\varepsilon_{h_k}(u)+\varsigma_{h_{k+1}}.
\end{equation}
Now we come to estimate the error for the eigenpair solution
$(\lambda_{h_{k+1}},u_{h_{k+1}})$ of problem (\ref{simple_Eigen_Problem}). Since $V_{H,h_{k+1}}$ is a
subset of $V_{h_{k+1}}$, we can think of problem (\ref{simple_Eigen_Problem}) as
a subspace approximation for the problem (\ref{GPEfem}).
Then based on the definition of $V_{H,h_{k+1}}$, the subspace approximation result from \cite{CancesChakirMaday}
and  Lemma \ref{lemma:Maday}, the following estimates hold
\begin{eqnarray}\label{u_uhk+1}
\|\bar{u}_{h_{k+1}} - u_{h_{k+1}}\|_1 &\lesssim& \inf_{v_{H,h_{k+1}} \in V_{H,h_{k+1}}}
\|\bar{u}_{h_{k+1}} - v_{H,h_{k+1}}\|
\leq \|\bar{u}_{h_{k+1}} - \widetilde{u}_{h_{k+1}}\|_1 \nonumber\\
&\lesssim&
\eta_{a}(V_{h_k})\delta_{h_k}(u)+\varepsilon_{h_k}(u)+\varsigma_{h_{k+1}}.
\end{eqnarray}
This is the desired result (\ref{Estimate_u_u_h_{k+1}}).
Then (\ref{Estimate_u_u_h_{k+1}_negative}) and (\ref{Estimate_lambda_lambda_h_{k+1}})
can be proved based on (\ref{Estimate_u_u_h_{k+1}}) and Lemma \ref{lemma:Maday}.
\end{proof}

\section{Multigrid method for GPE}
In this section, we introduce a type of multigrid method based on the
{\it One Correction Step} defined in Algorithms \ref{Algm:One_Step_Correction}.
This type of multigrid method can obtain the optimal error estimate as same as
solving the GPE directly on the finest finite element space.

In order to do multigrid scheme, we define a sequence of triangulations
$\mathcal{T}_{h_k}$
of $\Omega$ determined as follows. Suppose $\mathcal{T}_{h_1}$ is
produced from $\mathcal{T}_H$ by the regular refinement and
let $\mathcal{T}_{h_k}$ be obtained from $\mathcal{T}_{h_{k-1}}$ via regular
refinement  such that 
$$h_k\approx\frac{1}{\beta}h_{k-1}, \ \ \ \ k = 2,\cdots,n,$$
where $\beta$ denotes the refinement index.
Based on this sequence of meshes, we construct the corresponding linear finite
element spaces $V_{h_1}, \cdots, V_{h_n}$ such that
\begin{eqnarray}\label{fem_set_relationship}
V_{H} = V_{h_0} \subseteq V_{h_1} \subset V_{h_2} \subset \cdots \subset V_{h_n}\subset V.
\end{eqnarray}
In this paper, we assume the following relations of approximation errors hold
\begin{eqnarray}\label{Error_k_k_1}
\eta_a(V_{h_k})\approx \frac{1}{\beta}\eta_a(V_{h_{k-1}}),\ \ \ \
\delta_{h_k}(u)\approx\frac{1}{\beta}\delta_{h_{k-1}}(u),\ \ \ k=2,\cdots,n.
\end{eqnarray}
\begin{algorithm}\label{Algm:Multi_Correction}
Multigrid Scheme for GPE
\begin{enumerate}
\item Construct a series of nested finite element spaces $V_{H}, V_{h_1}, V_{h_2},
\cdots, V_{h_n}$ such that (\ref{fem_set_relationship}) and (\ref{Error_k_k_1}) hold.

\item Solve the GPE on the initial finite element space $V_{h_1}$:\\
Find $(\lambda_{h_1},u_{h_1})\in \mathbb{R}\times V_{h_1}$ such that $b(u_{h_1},u_{h_1})=1$
and
\begin{eqnarray*}\label{Initial_Nonlinear_Eigen_Problem}
a(u_{h_1},v_{h_1})&=&\lambda_{h_1}b(u_{h_1},v_{h_1}),\ \ \ \ \forall v_{h_1}\in V_{h_1}.
\end{eqnarray*}
\item Do $k = 1, \cdots, n-1$\\
Obtain a new eigenpair approximation
$(\lambda_{h_{k+1}},u_{h_{k+1}})\in \mathbb{R}\times V_{h_{k+1}}$
with the one correction step defined by Algorithm \ref{Algm:One_Step_Correction}
\begin{eqnarray*}
(\lambda_{h_{k+1}},u_{h_{k+1}}) =
{\it Correction}(V_{H},\lambda_{h_k},u_{h_k},V_{h_{k+1}},\varsigma_{h_{k+1}}).
\end{eqnarray*}
end Do
\end{enumerate}
Finally, we obtain an eigenpair approximation $(\lambda_{h_{n}},u_{h_{n}})
\in \mathbb{R}\times V_{h_{n}}$.
\end{algorithm}
\begin{theorem}\label{Thm:Multi_Correction}
Assume $h_1<h_0$ and the error $\varsigma_{h_{k+1}}$ of the linear solving by the multigrid method
in the $k+1$-th level mesh satisfies $\varsigma_{h_{k+1}}\leq \eta_a(V_{h_k})\delta_{h_k}(u)$ for
$k=1,\cdots,n-1$. After implementing Algorithm \ref{Algm:Multi_Correction}, the resultant eigenpair
approximation $(\lambda_{h_n},u_{h_n})$ has the following
error estimates
\begin{eqnarray}
\|\bar{u}_{h_n}-u_{h_n}\|_1 &\lesssim&\beta^2\eta_a(V_{h_n})\delta_{h_n}(u),\label{Multi_Correction_Err_fun1}\\
\|\bar{u}_{h_n}-u_{h_n}\|_0 &\lesssim& \eta_a(V_{h_n})\delta_{h_n}(u),\label{Multi_Correction_Err_fun0}\\
|\bar{\lambda}_{h_n}-\lambda_{h_n}| &\lesssim& \eta_a(V_{h_n})\delta_{h_n}(u),\label{Multi_Correction_Err_eigen}
\end{eqnarray}
with the condition $C\beta^2\eta_a(V_H)<1$ for the concerned constant $C$.
\end{theorem}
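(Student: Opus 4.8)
The plan is to argue by induction on the level index $k$, propagating the single–step error estimate of Theorem \ref{Thm:Error_Estimate_One_Step_Correction} from the coarsest solve up to level $n$, and then to resolve the resulting recursion as a geometric series whose contraction is guaranteed by the hypothesis $C\beta^2\eta_a(V_H)<1$. To this end I introduce the combined quantity $\varepsilon_k:=\|\bar u_{h_k}-u_{h_k}\|_0+|\bar\lambda_{h_k}-\lambda_{h_k}|$, which is exactly the input that Theorem \ref{Thm:Error_Estimate_One_Step_Correction} consumes through hypothesis (\ref{Estimate_lambda_lambda_h_k}) and the output it returns at the next level.

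For the base case, Step 2 of Algorithm \ref{Algm:Multi_Correction} solves the GPE exactly on $V_{h_1}$, so $(\lambda_{h_1},u_{h_1})=(\bar\lambda_{h_1},\bar u_{h_1})$ and $\varepsilon_1=0$. For the inductive step I apply Theorem \ref{Thm:Error_Estimate_One_Step_Correction} with the \emph{fixed} coarse space $V_H$, reading off from (\ref{Estimate_u_u_h_{k+1}_negative})--(\ref{Estimate_lambda_lambda_h_{k+1}}) the recursion
\begin{equation*}
\varepsilon_{k+1}\lesssim \eta_a(V_H)\,\varepsilon_{h_{k+1}}(u),\qquad
\varepsilon_{h_{k+1}}(u)=\eta_a(V_{h_k})\delta_{h_k}(u)+\varepsilon_k+\varsigma_{h_{k+1}}.
\end{equation*}
Using the standing hypothesis $\varsigma_{h_{k+1}}\le\eta_a(V_{h_k})\delta_{h_k}(u)$ absorbs the multigrid term, and with the shorthand $d_k:=\eta_a(V_{h_k})\delta_{h_k}(u)$ the recursion simplifies to $\varepsilon_{k+1}\lesssim \eta_a(V_H)\big(d_k+\varepsilon_k\big)$.

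Next I convert the two-factor decay (\ref{Error_k_k_1}) into a single relation for $d_k$: after reindexing, $\eta_a(V_{h_k})\approx\beta\,\eta_a(V_{h_{k+1}})$ and $\delta_{h_k}(u)\approx\beta\,\delta_{h_{k+1}}(u)$, so $d_k\approx\beta^{2}d_{k+1}$ and hence $d_j\approx\beta^{2(n-j)}d_n$. Unrolling the recursion from $\varepsilon_1=0$ gives $\varepsilon_{k+1}\lesssim\sum_{j=1}^{k}\eta_a(V_H)^{\,k+1-j}d_j$; substituting $d_j\approx\beta^{2(n-j)}d_n$ turns this into a geometric sum in the single ratio $q:=\beta^{2}\eta_a(V_H)$, and the hypothesis $C\beta^2\eta_a(V_H)<1$ is precisely $q<1$. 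The sum is therefore controlled by its largest term, yielding $\varepsilon_{k+1}\lesssim \beta^{2(n-k)}\eta_a(V_H)\,d_n$. Taking $k=n-1$ gives $\varepsilon_n\lesssim \beta^2\eta_a(V_H)d_n\lesssim d_n=\eta_a(V_{h_n})\delta_{h_n}(u)$, which is (\ref{Multi_Correction_Err_fun0}) and (\ref{Multi_Correction_Err_eigen}). For (\ref{Multi_Correction_Err_fun1}) I invoke the $H^1$ estimate (\ref{Estimate_u_u_h_{k+1}}) at the top level, $\|\bar u_{h_n}-u_{h_n}\|_1\lesssim\varepsilon_{h_n}(u)\lesssim d_{n-1}+\varepsilon_{n-1}$; since $d_{n-1}\approx\beta^2 d_n$ and $\varepsilon_{n-1}\lesssim\beta^4\eta_a(V_H)d_n\lesssim\beta^2 d_n$ by the same estimate, the extra factor $\beta^{2}$ emerges and $\|\bar u_{h_n}-u_{h_n}\|_1\lesssim\beta^2\eta_a(V_{h_n})\delta_{h_n}(u)$.

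The main obstacle is the summation in the previous step: a single correction does \emph{not} by itself shrink the error, because climbing one mesh level amplifies the best-approximation data $d_k$ by $\beta^2$ while only contracting the inherited error $\varepsilon_k$ by the fixed coarse-space factor $\eta_a(V_H)$. The estimate closes only when these competing effects combine into a genuine contraction, i.e. the per-level multiplier $q=\beta^2\eta_a(V_H)$ is less than one; this is exactly what $C\beta^2\eta_a(V_H)<1$ provides, guaranteeing that the accumulated contributions of all coarser levels form a convergent geometric series dominated by the finest-level term $d_n$. Some care is also needed to keep $\eta_a(V_H)$ \emph{uniform} across all levels, since it is this fixed factor, rather than any mesh-dependent decay, that supplies the contraction.
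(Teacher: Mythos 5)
Your proposal is correct and takes essentially the same route as the paper's own proof: both start from the exact coarse-level solve (so $\varepsilon_{h_1}(u)=0$), iterate the one-step estimates of Theorem \ref{Thm:Error_Estimate_One_Step_Correction} level by level, unroll the resulting recursion into a geometric series with ratio $\beta^2\eta_a(V_H)$ via (\ref{Error_k_k_1}), and invoke the condition $C\beta^2\eta_a(V_H)<1$ to dominate the sum by the finest-level term $\eta_a(V_{h_n})\delta_{h_n}(u)$. The only difference is bookkeeping: the paper runs the recursion on the $H^1$ error $\|\bar{u}_{h_k}-u_{h_k}\|_1$ and deduces (\ref{Multi_Correction_Err_fun0})--(\ref{Multi_Correction_Err_eigen}) afterward, whereas you run it on $\varepsilon_k=\|\bar{u}_{h_k}-u_{h_k}\|_0+|\bar{\lambda}_{h_k}-\lambda_{h_k}|$ and recover the $H^1$ bound (\ref{Multi_Correction_Err_fun1}) last from a single application of (\ref{Estimate_u_u_h_{k+1}}) at the top level --- a dual organization of the identical estimate.
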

\begin{proof}
From Lemma \ref{lemma:Maday} and the definition of Algorithm \ref{Algm:Multi_Correction},
we have $\bar{u}_{h_1}=u_{h_1}$ and $\bar{\lambda}_{h_1}=\lambda_{h_1}$.
Then from the proof of Theorem \ref{Thm:Error_Estimate_One_Step_Correction} with
$\varepsilon_{h_1}(u)=0$ and
 $\varsigma_{h_2}\lesssim\eta_a(V_{h_1})\delta_{h_1}(u)$,
 the following estimates hold
\begin{eqnarray}
\|\bar{u}_{h_2} - u_{h_2}\|_1 &\lesssim& \eta_a(V_{h_1})\delta_{h_1}(u),\label{Multi_uh1_1}\\
\|\bar{u}_{h_2} - u_{h_2}\|_0 &\lesssim& \eta_a(V_H)\|\bar{u}_{h_2} - u_{h_2}\|_1\lesssim
\eta_a(V_H)\eta_a(V_{h_1})\delta_{h_1}(u),\label{Multi_uh1_0}\\
|\bar{\lambda}_{h_2} - \lambda_{h_2}| &\lesssim& \eta_a(V_H)\|\bar{u}_{h_2} - u_{h_2}\|_1\lesssim
\eta_a(V_H)\eta_a(V_{h_1})\delta_{h_1}(u).\label{Multi_lambdah1_0}
\end{eqnarray}
Based on Theorem \ref{Thm:Error_Estimate_One_Step_Correction},
(\ref{Error_k_k_1}), (\ref{Multi_uh1_1})-(\ref{Multi_lambdah1_0})
and recursive argument, the final eigenfunction approximation $u_{h_n}$
has the following error estimates
\begin{eqnarray*}
\|\bar{u}_{h_n} - u_{h_n}\|_1 &\lesssim& \eta_{a}(V_{h_{n-1}}) \delta_{h_{n-1}}(u)+
\|\bar{u}_{h_{n-1}} - u_{h_{n-1}}\|_0 +|\bar{\lambda}_{h_{n-1}}-\lambda_{h_{n-1}}|\\
&\lesssim& \eta_{a}(V_{h_{n-1}}) \delta_{h_{n-1}}(u)+
\eta_a(V_H)\|\bar{u}_{h_{n-1}} - u_{h_{n-1}}\|_1\\
&\lesssim& \eta_{a}(V_{h_{n-1}}) \delta_{h_{n-1}}(u)+
\eta_a(V_H)\eta_{a}(V_{h_{n-2}}) \delta_{h_{n-2}}(u)\nonumber\\
&&\ \ \ \ + \eta_a^2(V_H)\|\bar{u}_{h_{n-2}} - u_{h_{n-2}}\|_1\\
&\lesssim& \sum^{n-1}_{k=1}\big(\eta_{a}(V_H)\big)^{n-k-1}\eta_a(V_{h_k})
\delta_{h_k}(u)\nonumber\\
&\lesssim&\Big(\sum^{n-1}_{k=1}\big(\beta^2\eta_{a}(V_H)\big)^{n-k-1}\Big)
\beta^2\eta_a(V_{h_n})\delta_{h_n}(u)\\
&\lesssim& \frac{1}{1-\beta^2\eta_a(V_H)}\beta^2\eta_a(V_{h_n})\delta_{h_n}(u)
\lesssim \beta^2\eta_a(V_{h_n})\delta_{h_n}(u).
\end{eqnarray*}
This means we have obtained the desired result (\ref{Multi_Correction_Err_fun1}). And
(\ref{Multi_Correction_Err_fun0}) can be proved by the similar
argument in the proof for Theorem \ref{Thm:Error_Estimate_One_Step_Correction}
which can be stated as follows
\begin{eqnarray*}
\|\bar{u}_{h_n} - u_{h_n}\|_0\lesssim \eta_a(V_H)\|\bar{u}_{h_n} - u_{h_n}\|_1
\lesssim \eta_a(V_H)\beta^2\eta_a(V_{h_n})\delta_{h_n}(u)\leq \eta_a(V_{h_n})\delta_{h_n}(u).
\end{eqnarray*}
Similar derivative can lead to the desired result (\ref{Multi_Correction_Err_eigen}) and
the proof is complete.
\end{proof}
Based on the results in Theorem \ref{Thm:Multi_Correction}, we can give the final error estimates for
Algorithm \ref{Algm:Multi_Correction}  as follows.
\begin{corollary}
Under the conditions of Theorem \ref{Thm:Multi_Correction}, we have the following error estimates
\begin{eqnarray}
\|u-u_{h_n}\|_1 &\lesssim&\delta_{h_n}(u),\label{Multi_Correction_Err_fun_final}\\
\|u-u_{h_n}\|_0 &\lesssim& \eta_a(V_{h_n})\delta_{h_n}(u),\label{Multi_Correction_Err_fun_final}\\
|\lambda-\lambda_{h_n}| &\lesssim& \eta_a(V_{h_n})\delta_{h_n}(u).\label{Multi_Correction_Err_eigen_final}
\end{eqnarray}
\end{corollary}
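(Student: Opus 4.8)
The plan is to obtain all three estimates by a single, elementary device: insert the standard finite element (Galerkin) solution $(\bar{\lambda}_{h_n},\bar{u}_{h_n})$ of (\ref{GPEfem}) as an intermediate quantity and apply the triangle inequality. This splits the total error between the exact eigenpair $(\lambda,u)$ and the multigrid output $(\lambda_{h_n},u_{h_n})$ into a \emph{discretization} part, measuring how well $V_{h_n}$ approximates the continuous problem and controlled by Lemma \ref{lemma:Maday}, and an \emph{algebraic} part, measuring how far the multigrid iterate is from the Galerkin solution on the same space and controlled by Theorem \ref{Thm:Multi_Correction}. Nothing beyond these two already-established results is needed.

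For the $H^1$ estimate I would write
\begin{eqnarray*}
\|u - u_{h_n}\|_1 \leq \|u - \bar{u}_{h_n}\|_1 + \|\bar{u}_{h_n} - u_{h_n}\|_1.
\end{eqnarray*}
Lemma \ref{lemma:Maday} bounds the first term by $\delta_{h_n}(u)$, while (\ref{Multi_Correction_Err_fun1}) bounds the second by $\beta^2\eta_a(V_{h_n})\delta_{h_n}(u)$. Since $\beta$ is a fixed refinement index and the factor $\eta_a(V_{h_n})$ is a small approximation quantity tending to $0$ as $h_n\to 0$, the product $\beta^2\eta_a(V_{h_n})$ stays bounded, so the algebraic term is absorbed into $\delta_{h_n}(u)$ and (\ref{Multi_Correction_Err_fun_final}) follows.

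For the $L^2$ and eigenvalue estimates I would use the analogous decompositions
\begin{eqnarray*}
\|u - u_{h_n}\|_0 &\leq& \|u - \bar{u}_{h_n}\|_0 + \|\bar{u}_{h_n} - u_{h_n}\|_0,\\
|\lambda - \lambda_{h_n}| &\leq& |\lambda - \bar{\lambda}_{h_n}| + |\bar{\lambda}_{h_n} - \lambda_{h_n}|.
\end{eqnarray*}
In each case both contributions are already of order $\eta_a(V_{h_n})\delta_{h_n}(u)$: the discretization parts by Lemma \ref{lemma:Maday}, and the algebraic parts by (\ref{Multi_Correction_Err_fun0}) and (\ref{Multi_Correction_Err_eigen}). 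Adding them gives (\ref{Multi_Correction_Err_fun_final}) and (\ref{Multi_Correction_Err_eigen_final}) at once.

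There is essentially no hard step here; the corollary is a bookkeeping consequence of the two earlier results, and I expect the only point requiring a moment of care to be the first estimate. There I must make sure the multigrid-induced term $\beta^2\eta_a(V_{h_n})\delta_{h_n}(u)$ does not dominate $\delta_{h_n}(u)$. This is guaranteed by the standing smallness condition $C\beta^2\eta_a(V_H)<1$ of Theorem \ref{Thm:Multi_Correction} together with the monotone decay (\ref{Error_k_k_1}), which keeps $\beta^2\eta_a(V_{h_n})$ bounded below one for all $n$, so that the $H^1$ error is indeed of the optimal order $\delta_{h_n}(u)$.
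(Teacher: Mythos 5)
Your proposal is correct and coincides with the paper's intended argument: the paper states the corollary without a written proof, as an immediate consequence of combining Lemma \ref{lemma:Maday} (discretization error of the Galerkin solution $(\bar{\lambda}_{h_n},\bar{u}_{h_n})$) with Theorem \ref{Thm:Multi_Correction} (distance of the multigrid iterate to that Galerkin solution) via the triangle inequality, which is exactly your decomposition. Your added care about absorbing the factor $\beta^2\eta_a(V_{h_n})$ using the smallness condition $C\beta^2\eta_a(V_H)<1$ and the decay relation (\ref{Error_k_k_1}) is the right justification for the $H^1$ bound.
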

\section{Discussion of the computational work}
In this section, we come to analyze the computational work for the multigrid
 scheme defined in Algorithm \ref{Algm:Multi_Correction}. Since the linear
 boundary value problem (\ref{Aux_Source_Problem})
 in Algorithm \ref{Algm:One_Step_Correction} is solved by multigrid method,
 the computational work for this part is optimal order.

First, we define the dimension of each level linear finite element space as
\begin{eqnarray*}
N_k := {\rm dim}V_{h_k},\ \ \ k=1,\cdots,n.
\end{eqnarray*}
Then we have
\begin{eqnarray}\label{relation_dimension}
N_k \thickapprox\Big(\frac{1}{\beta}\Big)^{d(n-k)}N_n,\ \ \ k=1,\cdots,n.
\end{eqnarray}

The computational work for the second step in Algorithm
\ref{Algm:One_Step_Correction} is different
from the linear eigenvalue problems
\cite{LinXie,Xie_Steklov,Xie_Nonconforming,Xie_JCP}.
 In this step, we need to solve a nonlinear eigenvalue problem
  (\ref{simple_Eigen_Problem}).
Always, some type of nonlinear iteration method (self-consistent iteration or
 Newton type iteration)
is used to solve this nonlinear eigenvalue problem. In each nonlinear iteration
step, we need to
build the matrix on the finite element space $V_{H,h_k}$ ($k=2,\cdots,n$) which
needs the computational
work $\mathcal{O}(N_k)$.
Fortunately, the matrix building can be carried out by the parallel way easily
in the finite element space since it has no data transfer.

\begin{theorem}
 Assume we use $m$ computing-nodes in Algorithm \ref{Algm:Multi_Correction},
the GPE problem solved in the coarse spaces $V_{H,h_k}$ ($k=1,\cdots, n$)
and $V_{h_1}$ need work $\mathcal{O}(M_H)$ and $\mathcal{O}(M_{h_1})$, respectively, and
the work multigrid method for solving the source problem in $V_{h_k}$ be $\mathcal{O}(N_k)$
for $k=2,3,\cdots,n$. Let $\varpi$ denote the nonlinear iteration times when we solve
the nonlinear eigenvalue problem (\ref{simple_Eigen_Problem}).
Then in each computational node, the work involved
in Algorithm \ref{Algm:Multi_Correction} has the following estimate
\begin{eqnarray}\label{Computation_Work_Estimate}
{\rm Total\ work}&=&\mathcal{O}\Big(\big(1+\frac{\varpi}{m}\big)N_n
+ M_H\log N_n+M_{h_1}\Big).
\end{eqnarray}
\end{theorem}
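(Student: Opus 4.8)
The plan is to track the computational cost contributed by each level of Algorithm \ref{Algm:Multi_Correction} separately and then sum the resulting geometric-type series. First I would account for the initialization: Step 2 solves the GPE directly on $V_{h_1}$, contributing $\mathcal{O}(M_{h_1})$. All remaining work comes from the $n-1$ invocations of the one correction step (Algorithm \ref{Algm:One_Step_Correction}), and I would isolate the three distinct pieces inside the $(k+1)$-th step. Piece one is the multigrid solve of the auxiliary linear problem (\ref{Aux_Source_Problem}) on $V_{h_{k+1}}$, which costs $\mathcal{O}(N_{k+1})$ by hypothesis. Piece two is the algebraic solution of the nonlinear eigenvalue problem (\ref{simple_Eigen_Problem}) posed on the low-dimensional space $V_{H,h_{k+1}}$, whose dimension is $\dim V_H+1$; running the $\varpi$ nonlinear iterations on this small problem costs $\mathcal{O}(M_H)$. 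Piece three is the assembly of the stiffness and mass matrices at each nonlinear iteration, which involves the fine-mesh function $\widetilde{u}_{h_{k+1}}\in V_{h_{k+1}}$ and therefore costs $\mathcal{O}(N_{k+1})$ per iteration, i.e. $\mathcal{O}(\varpi N_{k+1})$ over all iterations; since this assembly is element-wise and data-parallel, distributing it over the $m$ computing nodes reduces the per-node cost at level $k+1$ to $\mathcal{O}(\varpi N_{k+1}/m)$.

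Next I would sum over $k=1,\dots,n-1$. For the two pieces proportional to $N_{k+1}$ I invoke the dimension relation (\ref{relation_dimension}): since $N_k\approx(1/\beta)^{d(n-k)}N_n$ with $\beta>1$, the sum $\sum_{j=2}^{n}N_j$ is bounded by a convergent geometric series, $N_n\sum_{j=2}^{n}(\beta^{-d})^{n-j}\le\frac{1}{1-\beta^{-d}}N_n=\mathcal{O}(N_n)$. Hence the multigrid-solve piece sums to $\mathcal{O}(N_n)$ and the parallel-assembly piece sums to $\mathcal{O}(\varpi N_n/m)$, which together produce the $(1+\varpi/m)N_n$ term. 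For the coarse-space piece I use that there are $n-1$ correction steps, each costing $\mathcal{O}(M_H)$, so the contribution is $\mathcal{O}(nM_H)$; the geometric refinement $h_k\approx\beta^{-1}h_{k-1}$ forces $N_n\approx\beta^{d(n-1)}N_1$, so $n\approx\log N_n$ up to constants depending on $\beta$ and $d$, and the coarse-space contribution becomes $\mathcal{O}(M_H\log N_n)$. Adding back the initialization cost $\mathcal{O}(M_{h_1})$ yields the claimed estimate (\ref{Computation_Work_Estimate}).

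The routine part is the geometric summation itself; the step that needs the most care is the bookkeeping of the nonlinear eigenvalue solve, namely separating the $\mathcal{O}(M_H)$ cost of solving the genuinely low-dimensional algebraic eigenproblem on $V_{H,h_{k+1}}$ from the $\mathcal{O}(\varpi N_{k+1})$ cost of assembling its matrices, whose nonlinear term $\zeta|u|^2u$ must be integrated against basis functions supported on the fine mesh. Justifying that only the assembly, and not the algebraic solve, scales with $N_{k+1}$, and that the assembly is data-parallel so the $m$-node speedup applies cleanly across the $\varpi$ iterations, is where the argument must be stated precisely. Once this accounting is fixed, the logarithmic factor on $M_H$ and the $\varpi/m$ factor on $N_n$ follow directly from (\ref{relation_dimension}).
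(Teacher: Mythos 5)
Your proposal is correct and follows essentially the same route as the paper: the same per-level decomposition $W_k=\mathcal{O}\big(N_k+M_H+\varpi N_k/m\big)$ (multigrid solve, coarse eigensolve, parallel matrix assembly), the same geometric summation via (\ref{relation_dimension}), and the same identification $(n-1)M_H=\mathcal{O}(M_H\log N_n)$. Your explicit separation of the $\mathcal{O}(M_H)$ algebraic eigensolve from the $\mathcal{O}(\varpi N_{k+1}/m)$ assembly cost merely spells out what the paper states in the discussion preceding the theorem.
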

\begin{proof}
Let $W_k$ denote the work in any processor
of the correction step in the $k$-th finite element space $V_{h_k}$.
Then with the correction definition, we have
\begin{eqnarray}\label{work_k}
W_k&=&\mathcal{O}\left(N_k +M_H+\varpi\frac{N_k}{m}\right).
\end{eqnarray}
Iterating (\ref{work_k}) and using the fact (\ref{relation_dimension}), we obtain
\begin{eqnarray}\label{Work_Estimate}
\text{Total work} &=& \sum_{k=1}^nW_k\nonumber =
\mathcal{O}\left(M_{h_1}+\sum_{k=2}^n
\Big(N_k + M_H+\varpi\frac{N_k}{m}\Big)\right)\nonumber\\
&=& \mathcal{O}\Big(\sum_{k=2}^n\Big(1+\frac{\varpi}{m}\Big)N_k
+ (n-1) M_H + M_{h_1}\Big)\nonumber\\
&=& \mathcal{O}\left(\sum_{k=2}^n
\Big(\frac{1}{\beta}\Big)^{d(n-k)}\Big(1+\frac{\varpi}{m}\Big)N_n
+ M_H\log N_n+M_{h_1}\right)\nonumber\\
&=& \mathcal{O}\left(\big(1+\frac{\varpi}{m}\big)N_n
+ M_H\log N_n+M_{h_1}\right).
\end{eqnarray}
This is the desired result and we complete the proof.
\end{proof}
\begin{remark}
Since we have a good enough initial solution $\widetilde{u}_{h_{k+1}}$
in the second step of Algorithm \ref{Algm:One_Step_Correction},
then solving the nonlinear eigenvalue problem (\ref{simple_Eigen_Problem}) always does not
need many nonlinear iteration  times (always $\varpi\leq 3$).
In this case, the complexity in each computational node will be $\mathcal{O}(N_n)$ provided
$M_H\ll N_n$ and $M_{h_1}\leq N_n$.
\end{remark}
\section{Numerical examples}
In this section, we provided two numerical examples to validate the efficiency of the
multigrid method stated in Algorithm \ref{Algm:Multi_Correction}.

\begin{example}\label{Example_1}
In this example, we solve GPE (\ref{GPE}) with the computing domain
$\Omega$ being the unit square
$\Omega=(0,1)\times (0,1)$, $W=x_1^2+x_2^2$ and $\zeta=1$.
\end{example}
The sequence of finite element spaces are constructed by
using the linear finite element on the series of meshes which are produced by regular refinement
with $\beta = 2$ (connecting the midpoints of each edge). In this example, we use two
meshes which are generated by Delaunay method as the initial mesh
$\mathcal{T}_H = \mathcal{T}_{h_1}$ to investigate the convergence behaviors.
Since the exact eigenvalue is not known, we choose an adequately accurate approximation
 as the exact first eigenvalue for our numerical tests.
Figure \ref{Initial_Meshes_Example1} shows the corresponding initial
meshes: one is coarse and the other is fine.

From the error estimate result of GPEs by the finite element method, we have
\begin{eqnarray*}
\delta_h(u) = h,\ \ \ \ \ \eta_a(V_h)=h.
\end{eqnarray*}
Then from Theorem \ref{Thm:Multi_Correction}, the following estimates hold
\begin{eqnarray}\label{Superapproximation}
\|\bar{u}_{h_n}-u_{h_n}\|_1\lesssim h_n^2,\ \ \ \
\|\bar{u}_{h_n}-u_{h_n}\|_0\lesssim h_n^2,\ \ \ \
|\bar{\lambda}_{h_n}-\lambda_{h_n}|\lesssim h_n^2.
\end{eqnarray}
Algorithm \ref{Algm:Multi_Correction} is applied to solve the GPE. For comparison, we
also solve the GPE directly by the finite element method. Figure \ref{GPE_Model_Result}
gives the corresponding numerical results for the ground state solution
(the smallest eigenvalue and the corresponding eigenfunction) corresponding to the two
initial meshes illustrated in
Figure \ref{Initial_Meshes_Example1}. From Figure \ref{GPE_Model_Result},
 we find the multigrid scheme can obtain the optimal error estimates as same as the
direct finite element method for the eigenvalue and the corresponding
eigenfunction approximations which validates the results stated in
Theorem \ref{Thm:Multi_Correction} and (\ref{Superapproximation}).
\begin{figure}[ht]
\centering
\includegraphics[width=6cm,height=6cm]{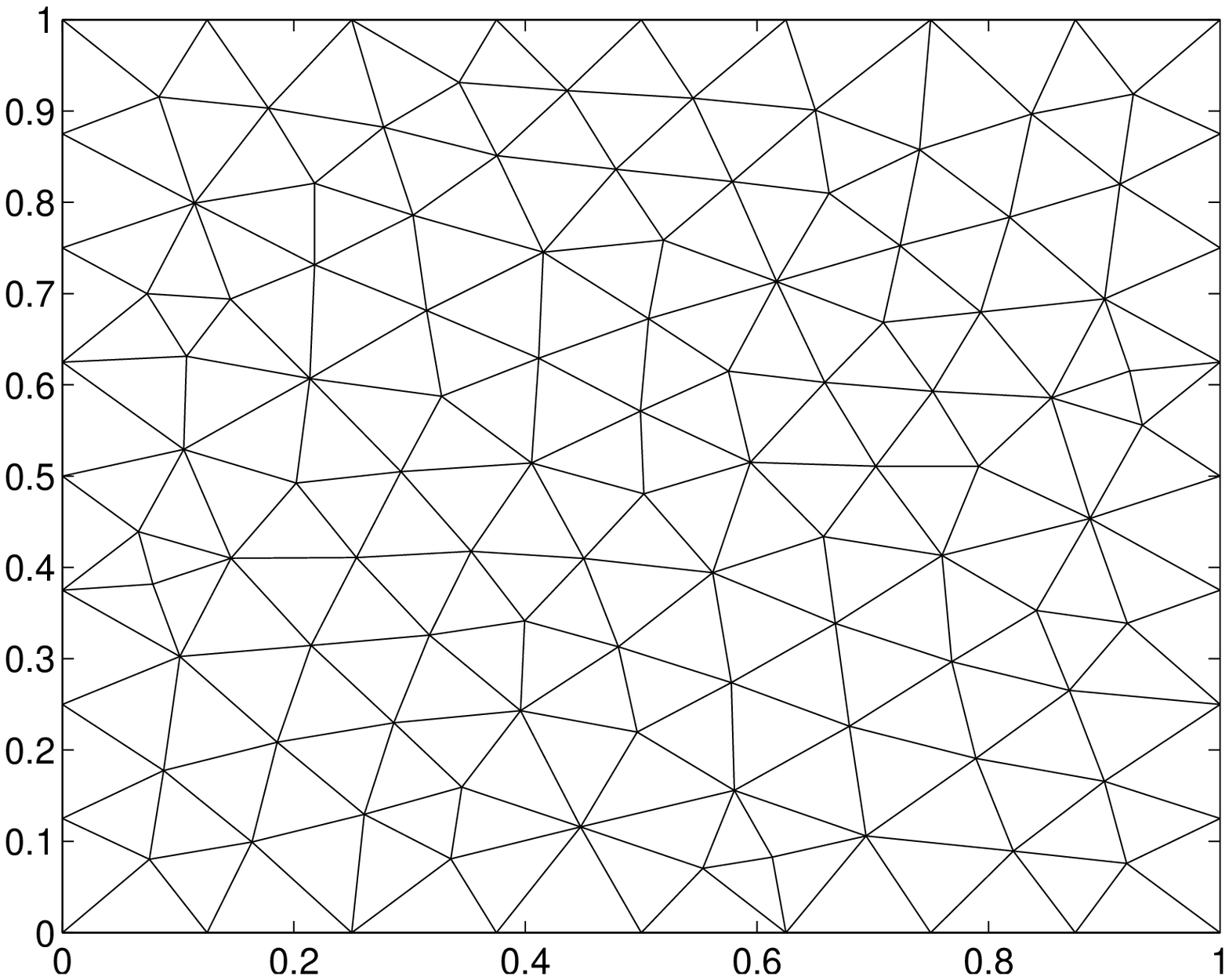}
\includegraphics[width=6cm,height=6cm]{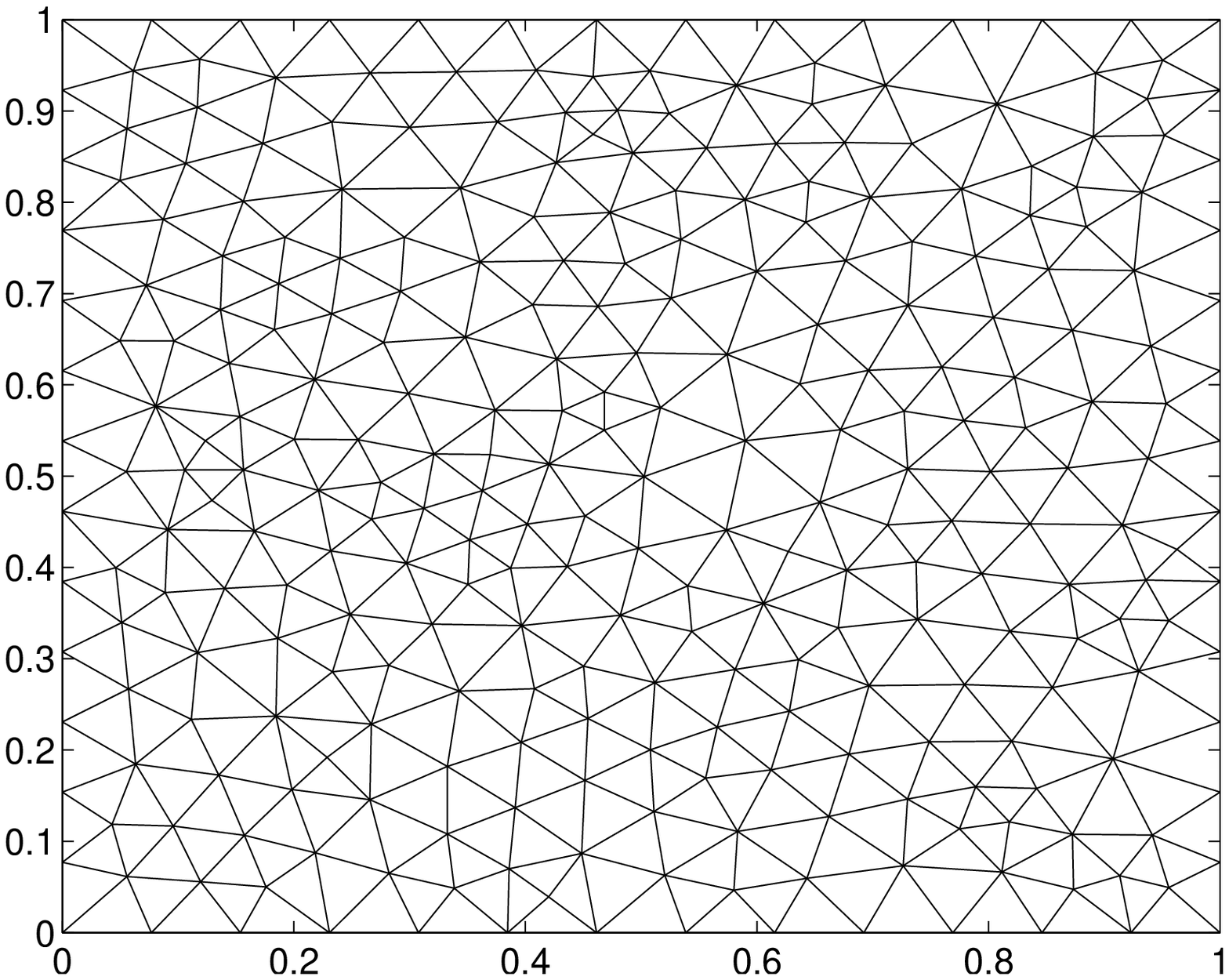}
\caption{\small\texttt The coarse and fine initial meshes for
the unit square (left: $H=1/6$ and right: $H=1/12$)}
\label{Initial_Meshes_Example1}
\end{figure}

\begin{figure}[ht]
\centering
\includegraphics[width=6cm,height=6cm]{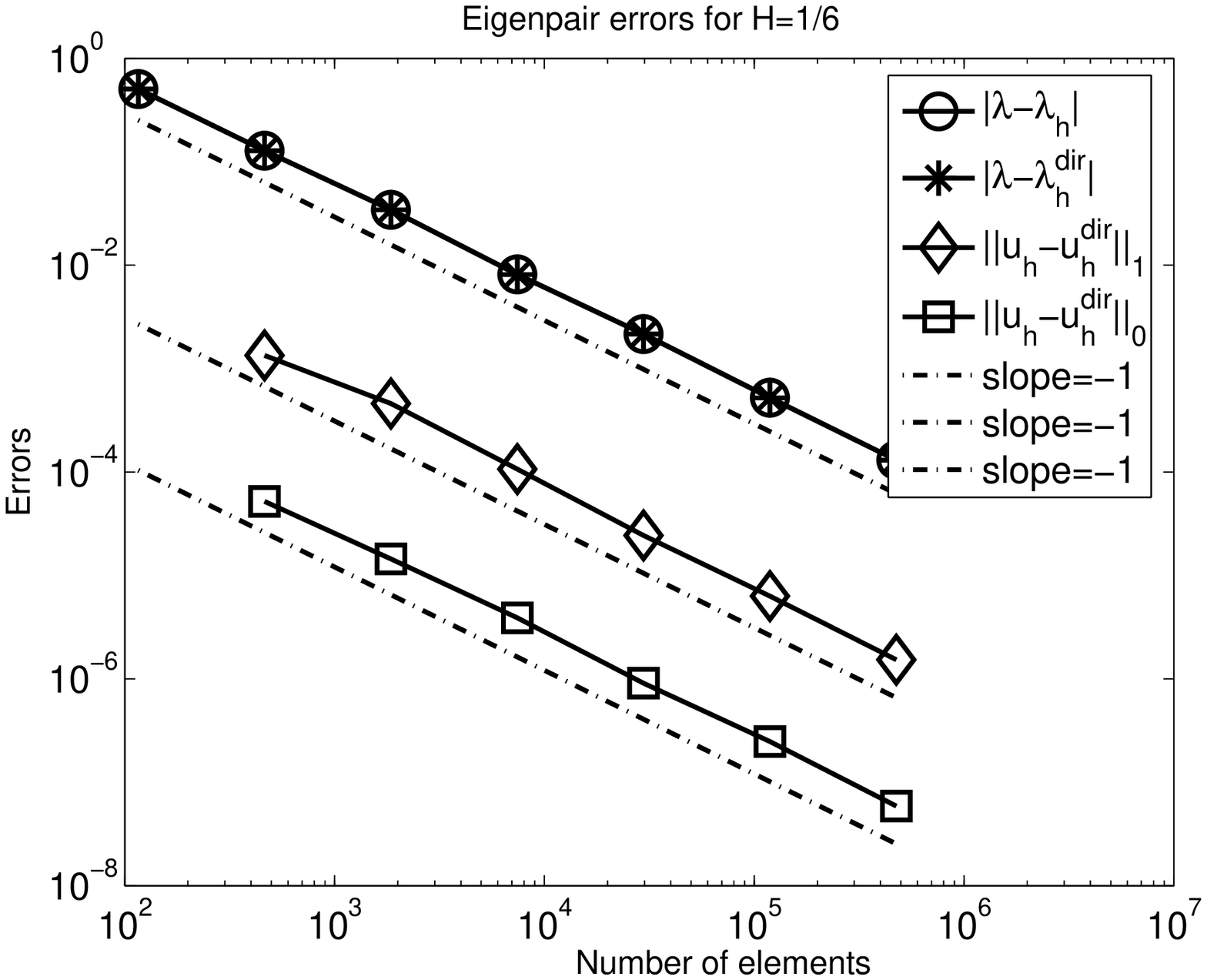}
\includegraphics[width=6cm,height=6cm]{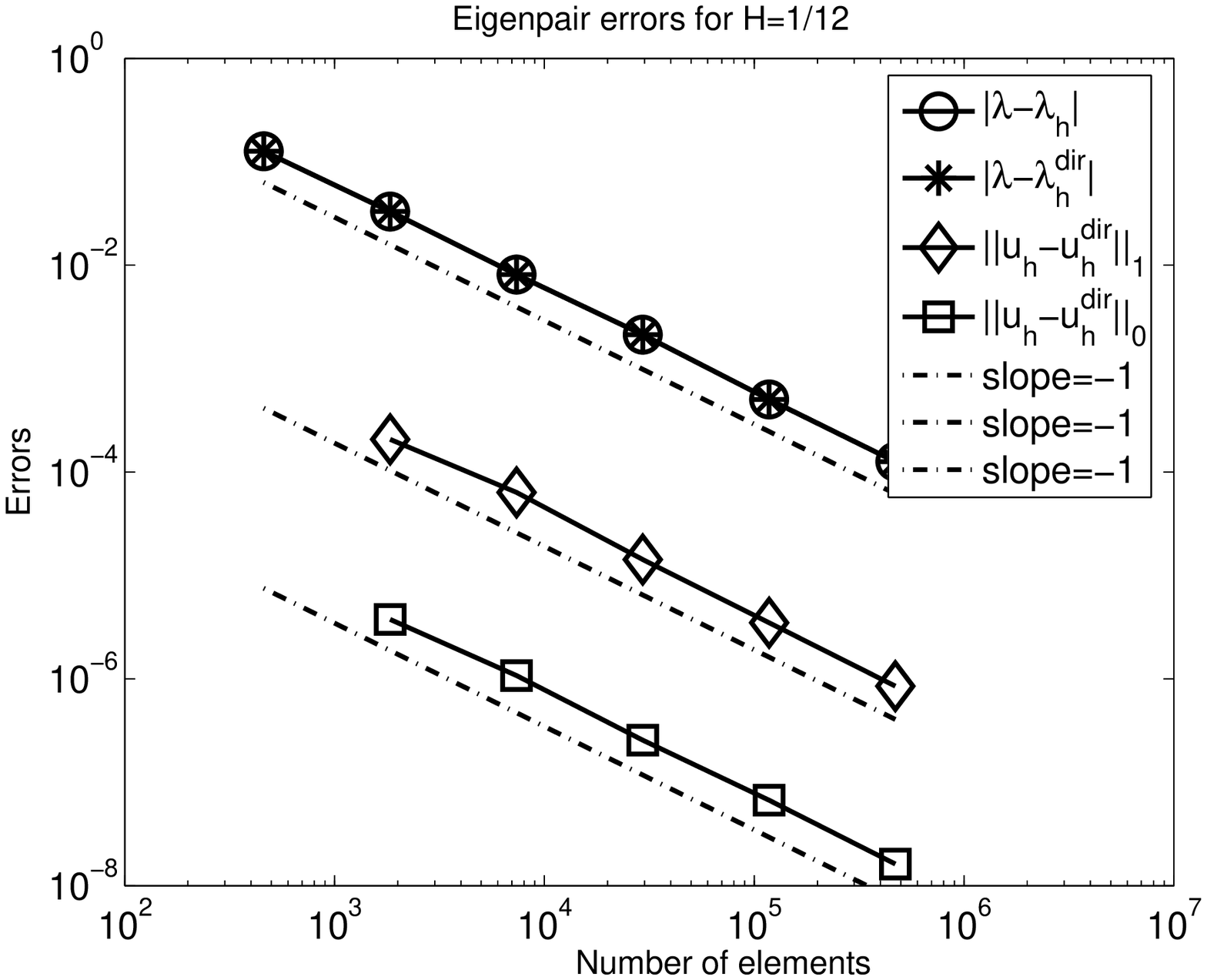}
\caption{\small\texttt The errors of the multigrid
algorithm for the first eigenvalue and the corresponding eigenfunction,
where $u_h^{\rm dir}$ and $\lambda_h^{\rm dir}$ denote the eigenfunction
and eigenvalue approximation by direct eigenvalue solving
(The left figure corresponds to the left mesh in
Figure \ref{Initial_Meshes_Example1} and the right figure corresponds
to the right right mesh in Figure \ref{Initial_Meshes_Example1})}
\label{GPE_Model_Result}
\end{figure}

\begin{example}\label{Example_2}
In this example, we also solve the GPE (\ref{GPE}), where the computing domain $\Omega$ is
the $L$-shape domain $\Omega=(-1,1)\times(-1,1)\backslash[0, 1)\times (-1, 0]$,
$W=x_1^2+x_2^2$ and $\zeta=1$.
\end{example}
Since $\Omega$ has a reentrant corner, eigenfunctions with singularities are expected.
The convergence order for eigenvalue approximations is less than $2$ by the linear
 finite element method which is the order predicted by the theory for regular eigenfunctions.
Thus, the adaptive refinement is adopted to couple with the multigrid method described in
Algorithm \ref{Algm:Multi_Correction} and the ZZ-method \cite{ZienkiewiczZhu}
is used to compute the a posteriori error estimators.

First, we investigate the numerical results for the first eigenvalue approximations.
Since the exact eigenvalue is not known, we also choose an adequately accurate
approximation as the exact smallest eigenvalue for our numerical tests.
We give the numerical results of the multigrid method in which the sequence
of meshes $\mathcal{T}_{h_1}$, $\cdots$, $\mathcal{T}_{h_n}$ is produced by
the adaptive refinement. Figure \ref{GPE_Adaptive_Result} shows the mesh
after $15$ adaptive iterations and
the corresponding numerical results for the adaptive iterations.
From Figure \ref{GPE_Adaptive_Result}, we can find the multigrid method can also
 work on the adaptive family of meshes and obtain the optimal accuracy.
The multigrid method can be coupled with the adaptive refinement naturally which
produce a type of adaptive finite element method (AFEM) for the GPE where
the direct eigenvalue solving in the finest space is not required. This can also
improve the overall efficiency of the AFEM for the nonlinear eigenvalue problem solving.
\begin{figure}[ht]
\centering
\includegraphics[width=6cm,height=6cm]{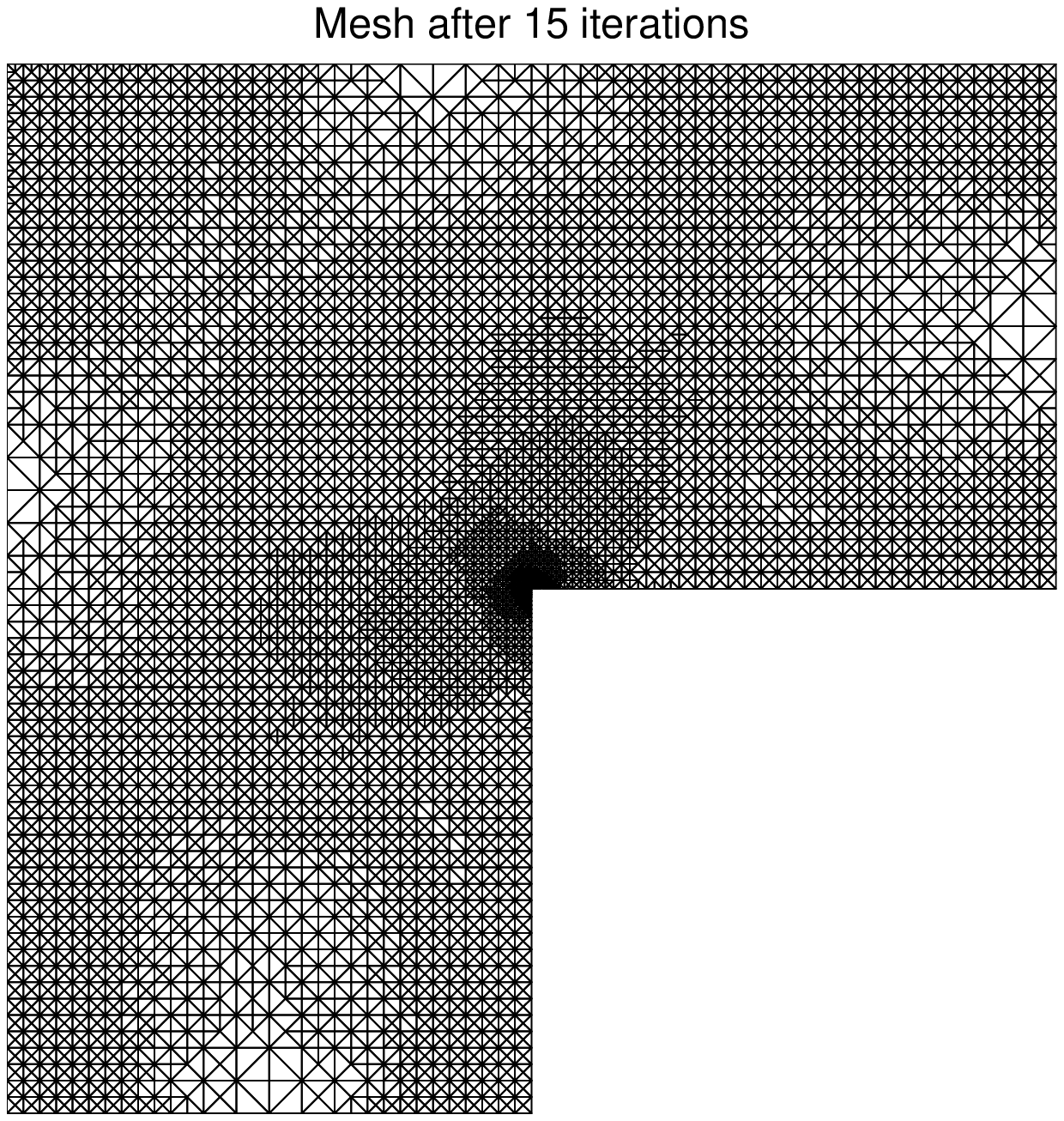}
\includegraphics[width=6cm,height=6cm]{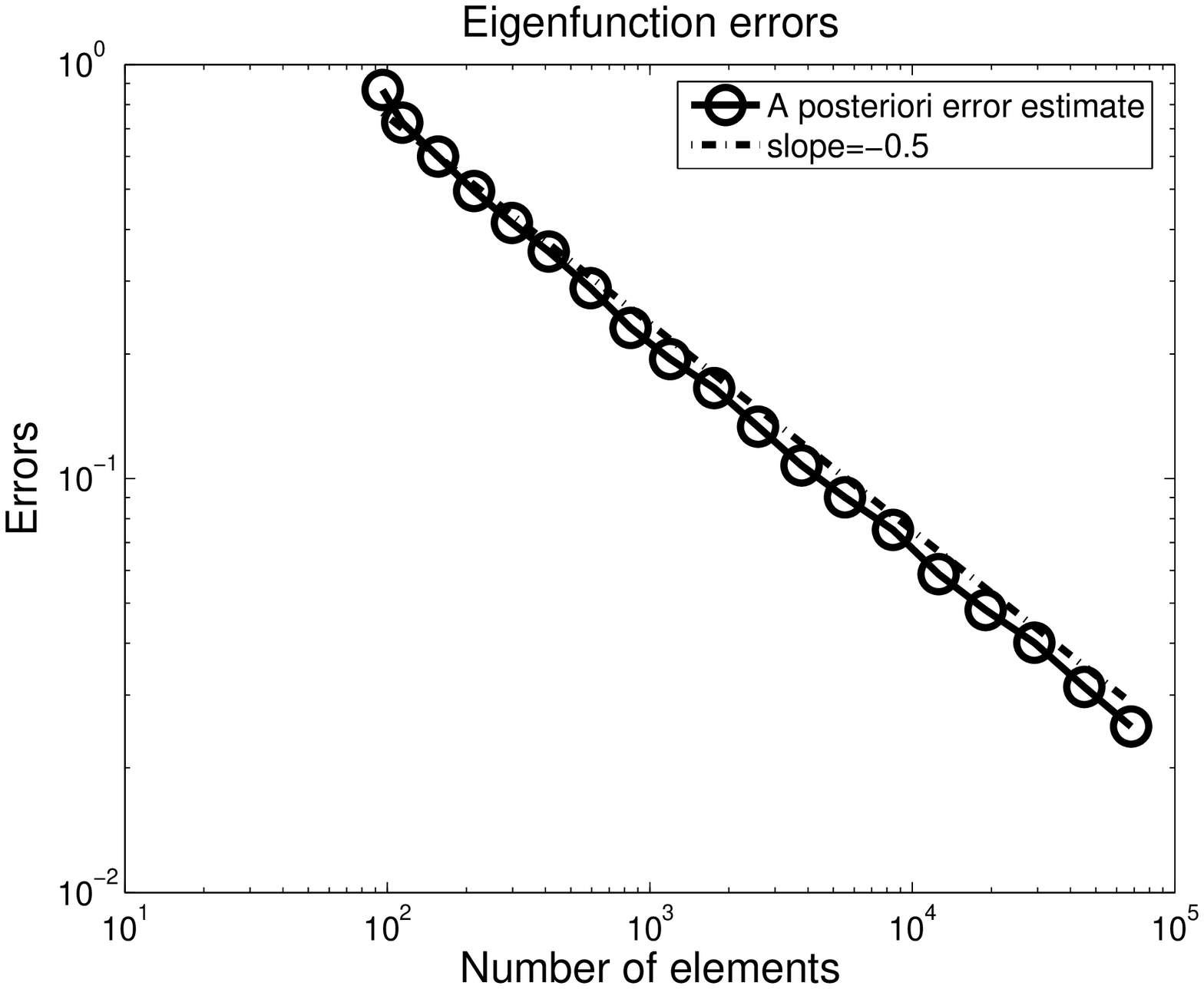}
\caption{\small\texttt The triangulations after adaptive iterations for
Example \ref{Example_2} by the linear element (left) and the
posteriori error estimates for the eigenfunction approximations
 (right)}\label{GPE_Adaptive_Result}
\end{figure}

\section{Concluding remarks}
In this paper, we propose a multigrid method to solve the GPE based on
the multilevel correction method. With this method, solving GPE is not more difficult
than solving the corresponding linear boundary value problem.
The corresponding error and computational work estimate have also been given for the
proposed multigrid scheme.
The idea and the method here can also be extended to other nonlinear eigenvalue problems
which always comes from the electronic structure computation. Algorithm
\ref{Algm:Multi_Correction} can also be coupled with
other numerical schemes to produce some efficient solvers for nonlinear eigenvalue
problems.


\end{document}